\newtheorem{theorem}{Theorem}
\newtheorem{corollary}{Corollary}
\newtheorem{proposition}{Proposition}
\newtheorem{definition}{Definition}
\newtheorem{remark}{Remark}
\newtheorem{notation}{Notation}
\def\P{\mathcal P}
\def\T{\mathcal{T}}
\def\Q{\mathcal{Q}}
\def\M{\mathcal M}
\def\P{\mathcal{P}}
\begin{document}

\title{Fuss-Catalan  numbers  and  planar partitions}

\author{F. Aicardi}
 \address{Sistiana 56, Trieste IT}
 \email{francescaicardi22@gmail.com}

\begin{abstract} We show  how the  Fuss-Catalan numbers $ \frac{1}{p n+1}\binom{pn+1}{n}$ enter  different problems of  counting simple  and  multiple planar partitions.
\end{abstract}
\subjclass{05A10,05A19,57M50}

\date{}
\keywords{}
\maketitle

\section{Results}  The Catalan  numbers  occur in  numerous  counting problems  as well  as  their  generalizations, the Fuss-Catalan numbers with one parameter, $p$ $$A_n^p:= \frac{1}{p n+1}\binom{pn+1}{n}.$$
The  Catalan numbers  correspond to the  case $p=2$.

A {\it planar partition} (or {\it non crossing} partition) is  a  set partition  that can be  represented  by a diagram whose  arcs, defining the  blocks,  do not cross each other, see Section \ref{pla}. 

In the context of  planar partitions, the numbers $A^p_n$  turn out  in  different  situations,  some of which we highlight here.     The  set of integers $\{1,\dots, n\}$ will be abbreviated as   $[n]$.

\noindent {\bf R1.}  The  Catalan  number    counts  the  number  of  planar partitions  of $[n]$.

\noindent {\bf R2.}\cite{Jon} The  Catalan  number counts the  number of  planar partitions of $[2n]$  whose  blocks  contain exactly  two  elements.

\noindent {\bf R3.}\cite{Cal} The  Fuss-Catalan number  $A^3_n $   counts the  number of  planar partitions of $[2n]$  in which each block  contains an  even number of  elements.

  We recall  that  a  double planar partition of $[n]$ is  a pair  of planar partitions  in which  the  first one is  a {\it refinement} of  the  second one,  i.e.,  each   block  of  the  first  partition  is  contained  ia  a  block of the  second  partition.

    In \cite{Aic1}, I have proved  that

\noindent {\bf R4.} the Fuss-Catalan number  $A^4_n $ counts  the  number of double planar partitions of [2n],   in  which   each  block of  the  first partition  has  two  elements, and   the  second partition is simply planar.

 This  was  done  by introducing  by  recurrence a  triangle of integers   whose elements of the $n$-th row  add  to   $A_n^4$.

 In \cite{Aic2} I  have  generalized  this  recurrence  to  every  integer  $p>0$ obtaining  all  Fuss-Catalan  numbers  $A_n^p$.  In the  same  note I  proved, using the recurrence  corresponding  to the  case $p=3$, that

\noindent {\bf R5.} $A^3_n$  counts  the  number  of  double  planar partitions of  $[n]$.

  So,  the question arises whether the mentioned  results  R1--R5    are  special cases  of  more general  statements.

This  note  answers  this  question,  proving in particular  four  statements, in which   the following  notations are  used.
\begin{enumerate}
 \item     A  {\it p-partition}  is  a  partition of  $[pn]$  whose  blocks  have  cardinality  $p$.

 \item  A {\it m-tuple planar  partition} of $[n]$ is an ordered set of $m$  planar partitions of $[n]$ such  that each one of  the first $m-1$ partitions is a refinement  of the successive.
  \end{enumerate}

\begin{theorem}\label{T1} For every integer $p>0$, the Fuss-Catalan number $A_n^p$ counts  the number of planar   $p$-partitions  of  $[pn]$.
  \end{theorem}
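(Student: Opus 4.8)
The plan is to prove the sharper combinatorial statement that a planar $p$-partition of $[pn]$ decomposes canonically into $p$ smaller planar $p$-partitions, and then to quote the standard recursive characterization of the Fuss-Catalan numbers. Write $f_p(n)$ for the number of planar $p$-partitions of $[pn]$, with the convention $f_p(0)=1$ (the empty partition). Given a planar $p$-partition $\P$ of $[pn]$ with $n\ge 1$, I would first single out the block $B=\{b_1<b_2<\dots<b_p\}$ containing $1$, so $b_1=1$, and the $p$ intervals that $B$ cuts out of $[pn]$: $R_i=\{b_i+1,\dots,b_{i+1}-1\}$ for $1\le i\le p-1$, and $R_p=\{b_p+1,\dots,pn\}$ (any of these may be empty).

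The structural heart of the argument is the claim that no block of $\P$ meets two distinct intervals $R_i$; equivalently, each $R_i$ is a union of blocks of $\P$. This is exactly where planarity enters: if some block $C\neq B$ had an element $c\in R_i$ together with an element $d\notin R_i$, then $d$ lies either before $b_i$ or after $b_{i+1}$. In the first case $d<b_i<c<b_{i+1}$ with $b_i,b_{i+1}\in B$ and $d,c\in C$, a crossing; the second case is symmetric. Granting this, each $|R_i|$ is a multiple of $p$, say $|R_i|=p\,a_i$ with $a_i\ge 0$ and $a_1+\dots+a_p=n-1$, and the restriction of $\P$ to each $R_i$ is an arbitrary planar $p$-partition of a $pa_i$-element linearly ordered set, contributing a factor $f_p(a_i)$. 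Conversely, any choice of nonnegative $a_1,\dots,a_p$ summing to $n-1$ fixes the positions $b_1,\dots,b_p$ and the intervals $R_i$, and then any choice of planar $p$-partitions on the $R_i$ reassembles, together with the block $B$, into a planar $p$-partition of $[pn]$: blocks living inside different intervals occupy disjoint order-intervals and so cannot cross, while $B$ crosses nothing because the $b_i$ are precisely the separators between consecutive intervals. This bijection yields
\begin{equation*}
f_p(n)=\sum_{\substack{a_1+\dots+a_p=n-1\\ a_i\ge 0}} f_p(a_1)\cdots f_p(a_p),\qquad f_p(0)=1.
\end{equation*}

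To conclude, I would recall that this convolution recurrence, together with the initial value $f_p(0)=1$, characterizes the Fuss-Catalan numbers: the generating function $F(x)=\sum_{n\ge 0}f_p(n)x^n$ satisfies $F=1+xF^p$, and Lagrange inversion gives $[x^n]F=\frac{1}{pn+1}\binom{pn+1}{n}=A_n^p$. (Equivalently, $A_n^p$ counts plane $p$-ary trees with $n$ internal nodes, which obey the same recurrence with the same initial value, so $f_p=A^p$ termwise.) The only genuinely delicate point is the non-straddling claim for the intervals $R_i$ and, dually, the check that the reassembly never produces a crossing; the rest is routine Fuss-Catalan bookkeeping. I expect the write-up to spend most of its effort making the crossing argument airtight, including the degenerate cases where some $R_i$ are empty.
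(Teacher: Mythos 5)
Your proof is correct, but it takes a genuinely different route from the paper. You use the classical first-block decomposition: the block $B$ containing $1$ cuts $[pn]$ into $p$ intervals, planarity forces each interval to be a union of blocks, and this yields the convolution recurrence whose generating function satisfies $F=1+xF^p$, identified with the Fuss--Catalan series by Lagrange inversion (or by comparison with $p$-ary trees). The crossing argument you flag as the delicate point does go through: if a block $C$ met two intervals, one finds four points alternating between $B$ and $C$, which is exactly a crossing in the standard arc diagram, and the reassembly direction is as routine as you say. The paper instead refines the count by the number of \emph{boxes} (non-nested blocks): it shows that the array $N^p(n,k)$ of planar $p$-partitions of $[pn]$ with $k$ boxes satisfies the same initial conditions and binomial recurrence as the reflected Fuss--Catalan triangle $F^p(n,k)$ from the author's earlier work, whose rows sum to $A_n^p$. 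Your approach is more self-contained and elementary, resting only on the standard functional equation for Fuss--Catalan numbers; the paper's approach buys the finer statistic (the distribution of planar $p$-partitions by number of boxes) and ties the result to the Fuss--Catalan triangles that motivated the note, at the cost of importing the triangle recurrence from a previous paper.
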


This  generalizes result  R2, see  Section \ref{S1}.

\begin{theorem}\label{T2} For   $p=2q$,    $A_n^p$   counts   the double   planar partitions of  $[qn]$, in which  the first one is a q-partition  and  the second one  is  simply planar.
\end{theorem}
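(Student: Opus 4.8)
The plan is to enumerate the double planar partitions in question by a recursive decomposition, convert the decomposition into a functional equation for a generating function, and recognise the solution as a Fuss--Catalan series using Theorem~\ref{T1}. Fix $q$ and write $D_n$ for the number of double planar partitions $(\pi_1,\pi_2)$ of $[qn]$ in which $\pi_1$ is a $q$-partition and $\pi_2$ is planar (a coarsening of $\pi_1$); the goal is $D_n=A_n^{2q}$. I will get this by showing that $F(x):=\sum_{n\ge 0}D_n x^n$ satisfies $F=1+xF^{2q}$, which, having a unique power-series solution with constant term $1$, identifies $F$ with $\sum_{n\ge 0}A_n^{2q}x^n$ (this is the standard functional equation satisfied by the latter).

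The decomposition is organised around the block of the \emph{coarser} partition through $1$, not around the block of $\pi_1$ through $1$. Let $C$ be the block of $\pi_2$ containing $1$. Since $\pi_1$ refines $\pi_2$, the block $C$ is a union of some $\ell\ge 1$ blocks of $\pi_1$, hence $|C|=q\ell$; write $C=\{c_1<\dots<c_{q\ell}\}$ with $c_1=1$. The structural claim to establish is that the $q\ell$ intervals $(c_1,c_2),\dots,(c_{q\ell-1},c_{q\ell})$ and $(c_{q\ell},qn]$ are ``independent'': because $\pi_2$ is non-crossing, each such interval is a union of complete blocks of $\pi_2$ — hence also of $\pi_1$ — so its size is a multiple of $q$, say $qr_j$, with $r_1+\dots+r_{q\ell}=n-\ell$; and, again by non-crossingness, no block of $\pi_2$ meets two of these intervals, so the restriction of $(\pi_1,\pi_2)$ to the $j$-th interval is a double planar partition of $[qr_j]$ of exactly the required kind. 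Meanwhile the $\ell$ blocks of $\pi_1$ lying inside $C$, read on $C$ relabelled order-isomorphically as $[q\ell]$, form a planar $q$-partition of $[q\ell]$. This data reassembles uniquely, so I obtain a bijection between the pairs counted by $D_n$ and the triples consisting of an integer $\ell\ge 1$, a planar $q$-partition of $[q\ell]$, and a double planar partition of the required kind in each of the $q\ell$ slots, with $n=\ell+\sum_j r_j$.

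Next I translate this to generating functions, invoking Theorem~\ref{T1} with parameter $q$: the number of planar $q$-partitions of $[q\ell]$ is $A_\ell^q$, so their generating function is $f_q(y):=\sum_{\ell\ge 0}A_\ell^q y^\ell$. The bijection above gives $F=f_q(xF^q)$, the variable $x$ recording the $\ell$ blocks of $\pi_1$ inside $C$ and each factor $F$ recording one slot. Finally I combine this with the Fuss--Catalan functional equation $f_q(y)=1+y\,f_q(y)^q$: substituting $y=xF^q$ and using $f_q(xF^q)=F$ gives
\[
F=1+xF^q\cdot f_q(xF^q)^q=1+xF^q\cdot F^q=1+xF^{2q},
\]
as wanted, whence $D_n=A_n^{2q}$ for all $n$.

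The step I expect to be the real work is the structural claim in the second paragraph — that the $q\ell$ intervals carved out by $C$ are mutually independent and each inherits a bona fide double planar partition of the required type. This reduces to a careful case analysis of where a second block of $\pi_2$ can sit relative to $C$; the delicate case is ruling out a block that runs from one of the gaps of $C$ into the final tail, which uses that $c_1=1$ lies to the left of everything in the gaps. Once that combinatorial lemma is in place, the generating-function bookkeeping and the appeal to Theorem~\ref{T1} are routine. As sanity checks, $q=1$ reduces the statement to the fact that $A_n^2$ counts the planar partitions of $[n]$ (R1), and $q=2$ reduces it to R4.
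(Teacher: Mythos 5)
Your proposal is correct, but it takes a genuinely different route from the paper's. The paper proves Theorem~\ref{T2} by exhibiting an explicit bijection between the set $\P^{2q}_n$ of planar $2q$-partitions of $[2qn]$ and the set of double partitions in question: from $P\in\P^{2q}_n$ it extracts the $q$-partition $P'=P^{/2}$ (keep the odd positions and relabel) and the coarser partition $P''=(I_2*P)^{/2}$, then argues invertibility of this map; the count $A_n^{2q}$ comes from Theorem~\ref{T1} applied with parameter $2q$. You instead count the double partitions directly via the first-block decomposition of the \emph{coarse} partition $\pi_2$ at the block $C\ni 1$, getting $F=f_q(xF^q)$ and hence, by the Fuss--Catalan functional equation $f_q(y)=1+y\,f_q(y)^q$, the equation $F=1+xF^{2q}$. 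Your structural claim --- that the $q\ell$ gaps cut out by $C$ are unions of complete blocks of $\pi_2$ (hence of $\pi_1$, so of size divisible by $q$) and carry independent double partitions of the same type --- is the standard non-crossing decomposition and holds exactly as you sketch it: a block with points in two different gaps, or in a gap and the tail, would interleave with two consecutive elements of $C$. The trade-offs: the paper's bijection is explicit, produces the arcs-and-ties diagrams used later, and iterates directly to give Theorem~\ref{T3}, whereas your argument uses Theorem~\ref{T1} only for the inner factor $A_\ell^q$, needs the (standard, and available in the cited \cite{GKP}) functional equation for the Fuss--Catalan series as an external input, and in exchange avoids the paper's somewhat delicate verification that the inverse map is well defined, which the paper only illustrates by example. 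Your sanity checks ($q=1$ recovering R1, $q=2$ recovering R4) are both right.
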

 So,  this  theorem  generalizes  R4 ($q=2$) and  likewise it has a generalization:

 \begin{theorem}{\label{T3}} For  every pair $(m,p)$  of positive integers,     $A_n^{mp}$   counts   the $m$-tuple  planar partitions of  $[pn]$, in which  the first one is  a  $p$-partition and  the other ones are    simply planar.
\end{theorem}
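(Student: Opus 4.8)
\noindent\emph{Plan of proof.} The plan is to argue by induction on $m$, with Theorem~\ref{T1} as the base case $m=1$ and the inductive step carried out with generating functions. Fix $p>0$; for $m\ge1$ and $N\ge0$ set
\[
c^{(m)}_N:=\#\bigl\{(\pi_1,\dots,\pi_m)\ :\ \pi_1\le\pi_2\le\cdots\le\pi_m\ \text{planar partitions of }[N],\ \pi_1\ \text{a }p\text{-partition}\bigr\},
\]
with $c^{(m)}_0:=1$, and $\Phi_m(x):=\sum_{N\ge0}c^{(m)}_N x^N$. Let $B_q(y):=\sum_{n\ge0}A_n^q y^n$; this Fuss--Catalan series satisfies the classical equation $B_q(y)=1+y\,B_q(y)^q$ (equivalently, $A_n^q=\tfrac{1}{qn+1}\binom{qn+1}{n}=\tfrac{1}{(q-1)n+1}\binom{qn}{n}$). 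Theorem~\ref{T3} is exactly the statement that $\Phi_m(x)=B_{mp}(x^p)$, since that means $c^{(m)}_{pn}=A_n^{mp}$ while $c^{(m)}_N=0$ for $p\nmid N$; and for $m=1$ it is Theorem~\ref{T1}.

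For the inductive step I would decompose an $m$-tuple by its coarsest member $\pi_m$. Since $\pi_1\le\cdots\le\pi_{m-1}$ all refine $\pi_m$, restricting to a block $B$ of $\pi_m$ produces an $(m-1)$-tuple of planar partitions of $B$ (with the order induced from $[N]$) whose first member is a $p$-partition of $B$; conversely, such data inside the blocks of a fixed planar partition glue back to an $m$-tuple. The point needing care is that the glued partitions are again planar: if each block of a planar partition is refined by a partition planar for the induced order, the whole stays planar, because two blocks of the refinement lie either in one block of $\pi_m$ (where they do not cross by hypothesis) or in two distinct, non-crossing blocks of $\pi_m$. This yields, for $N\ge1$,
\[
c^{(m)}_N=\sum_{\sigma}\ \prod_{B\in\sigma}c^{(m-1)}_{|B|},
\]
the sum over all planar partitions $\sigma$ of $[N]$ (only the $\sigma$ with every block size divisible by $p$ contribute).

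Next I would convert this into a functional equation via the standard first-return decomposition of a planar partition: conditioning on the block $\{1=i_1<\dots<i_k\}$ containing $1$, every other block sits inside one of the $k$ gaps $(i_1,i_2),\dots,(i_{k-1},i_k),(i_k,N]$; hence, for any weights $(g_k)_{k\ge1}$ with $G(x)=\sum_{k\ge1}g_k x^k$, the series $\widetilde C(x)=\sum_N\widetilde c_N x^N$ with $\widetilde c_N=\sum_\sigma\prod_B g_{|B|}$ satisfies $\widetilde C(x)=1+G\bigl(x\,\widetilde C(x)\bigr)$. Taking $g_k=c^{(m-1)}_k$, so that $G=\Phi_{m-1}-1$, this becomes
\[
\Phi_m(x)=\Phi_{m-1}\bigl(x\,\Phi_m(x)\bigr).
\]
Substituting the inductive hypothesis $\Phi_{m-1}(x)=B_{(m-1)p}(x^p)$ and writing $W=\Phi_m(x)$ and $q=(m-1)p$, one gets $W=B_q(x^pW^p)=1+x^pW^p\cdot B_q(x^pW^p)^q=1+x^pW^p\cdot W^q=1+x^pW^{mp}$, using $B_q(y)=1+y\,B_q(y)^q$. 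Since $W$ has constant term $1$, this forces $W$ to be a series in $x^p$; writing $W=V(x^p)$ turns the relation into $V(t)=1+t\,V(t)^{mp}$, i.e.\ $V=B_{mp}$, whence $\Phi_m(x)=B_{mp}(x^p)$ and the induction closes.

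The genuinely combinatorial ingredient, and the step I expect to be the main obstacle, is the gluing claim of the second paragraph---that refining a planar partition within its blocks preserves planarity, together with its converse---which is what makes the product formula for $c^{(m)}_N$ valid. Everything afterwards is formal: the first-return identity is routine, and the remainder is algebra with the Fuss--Catalan equation. One should also check carefully that the block containing $1$ does cut $[N]$ into gaps confining every other block, which once more is immediate from planarity.
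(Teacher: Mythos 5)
Your argument is correct, but it takes a genuinely different route from the paper. The paper proves Theorem~\ref{T3} by exhibiting an explicit bijection between the $m$-tuples in question and the planar $mp$-partitions of $[mpn]$, whose number is $A_n^{mp}$ by Theorem~\ref{T1}: from such a partition $P$ it builds the tuple $P^{(r)}=(P*I_r)^{/m}$, $r=1,\dots,m$, by merging the blocks of $P$ with the blocks $\{mk+1,\dots,mk+r\}$ and then retaining only the points congruent to $1$ modulo $m$; this is the same tied-arc-diagram device used for Theorem~\ref{T2}, of which Theorem~\ref{T3} is presented as a direct extension. You instead induct on $m$, using Theorem~\ref{T1} only as the base case, together with three standard ingredients: the multiplicativity of lower intervals in the non-crossing partition lattice (your gluing lemma is correct --- a crossing in the glued partition would force a crossing either inside a single block of $\pi_m$ or between two distinct blocks of $\pi_m$, both excluded), the first-return decomposition giving $\Phi_m(x)=\Phi_{m-1}(x\,\Phi_m(x))$, and the Fuss--Catalan equation $B_q=1+yB_q^{\,q}$, after which the computation $W=1+x^pW^{(m-1)p+p}=1+x^pW^{mp}$ closes the induction. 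What the paper's approach buys is an explicit combinatorial correspondence and a uniform mechanism that also drives Theorems~\ref{T2} and~\ref{T4}; what yours buys is rigor and self-containment --- the paper's verification that its map is a bijection is only sketched (``the proof verifies that the products of $P_i$ with $I_r$ cover all these possibilities''), whereas every step of your scheme is routine once the gluing lemma is stated, and the same functional-equation method would also yield Theorems~\ref{T2}, \ref{T4} and Corollary~\ref{C1} with no extra work.
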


  Theorem \ref{T3} has  the  following
\begin{corollary}{\label{C1}} For  every integer $m>1$    $A_n^{m}$   counts   the $(m-1)$-tuple  planar partitions of  $[n]$.
\end{corollary}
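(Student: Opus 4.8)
The plan is to deduce Corollary \ref{C1} directly from Theorem \ref{T3} by specializing the block size to $p=1$. A $1$-partition of $[n]$ in the sense of item (1) is a partition of $[n]$ all of whose blocks have cardinality $1$, so there is exactly one such partition, namely the partition $\hat 0$ of $[n]$ into singletons, and it is trivially non-crossing. Applying Theorem \ref{T3} to the pair $(m,1)$ therefore yields that $A_n^{m\cdot 1}=A_n^m$ counts the $m$-tuple planar partitions $(\pi_1,\pi_2,\dots,\pi_m)$ of $[n]$ in which $\pi_1$ is the singleton partition and $\pi_2,\dots,\pi_m$ are arbitrary planar partitions subject only to the refinement chain $\pi_1\preceq\pi_2\preceq\cdots\preceq\pi_m$.

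Next I would observe that the first entry carries no information. Since the singleton partition refines every partition of $[n]$, the condition $\pi_1\preceq\pi_2$ holds automatically for any choice of $\pi_2$. Hence the map $(\pi_1,\pi_2,\dots,\pi_m)\mapsto(\pi_2,\dots,\pi_m)$ is a bijection from the set counted by Theorem \ref{T3} with $p=1$ onto the set of $(m-1)$-tuple planar partitions of $[n]$, with inverse $(\pi_2,\dots,\pi_m)\mapsto(\hat 0,\pi_2,\dots,\pi_m)$. Composing this bijection with the enumeration $A_n^m$ from the previous paragraph gives the claim. The hypothesis $m>1$ is used only so that ``$(m-1)$-tuple planar partition'' refers to a genuine nonempty tuple in the sense of item (2); for $m=1$ the statement would degenerate to the trivial identity $A_n^1=1$, matching the unique $1$-partition of $[n]$.

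There is essentially no obstacle here beyond checking that Theorem \ref{T3} is invoked within its stated range, which it is, since $(m,1)$ is a pair of positive integers for every $m\ge 1$. The only points that deserve to be spelled out are the uniqueness of the $1$-partition and the fact that it is the bottom element of the refinement order on planar partitions of $[n]$; both are immediate, and once they are recorded the reduction is purely formal.
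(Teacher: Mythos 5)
Your proposal is correct and follows the paper's own argument: specialize Theorem \ref{T3} to $p=1$, note that the unique $1$-partition of $[n]$ is the singleton partition which refines every planar partition (the paper's Remarks \ref{p1} and \ref{I}), and drop it to obtain a bijection with $(m-1)$-tuple planar partitions. You simply spell out in more detail the reduction the paper states in one line.
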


This  generalize  result  R1 ($m=2$)  and  R5  ($m=3$), see  Section \ref{S3}.

Let  $\M^p_n$ be  the  set  of  planar  partitions  of  $[pn]$, in  which  each  block  contains  a  number of  elements  multiple of  $p$.

\begin{theorem}{\label{T4}} For  every integer $p>0$, the  cardinality of   $\M^p_n$   is  $A_n^{p+1}$.
\end{theorem}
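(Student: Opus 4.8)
The plan is to determine the generating function $M_p(x):=\sum_{n\ge 0}|\M^p_n|\,x^n$ from the self-similar structure of noncrossing partitions, and then to recognise the functional equation it satisfies as the one characterising the Fuss--Catalan series $\sum_{n\ge 0}A_n^{p+1}x^n$.

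First I would use the standard recursive decomposition of a noncrossing partition by the block $B$ containing the element $1$. Writing $B=\{1=i_1<i_2<\dots<i_\ell\}$, the noncrossing hypothesis forces every other block to be contained in one of the $\ell$ intervals $(i_1,i_2),(i_2,i_3),\dots,(i_{\ell-1},i_\ell),(i_\ell,pn]$, and the restriction of the partition to each such interval is an arbitrary noncrossing partition of it all of whose blocks again have cardinality divisible by $p$. Since a finite set admits a partition into blocks of size divisible by $p$ only when its own cardinality is divisible by $p$, we get $\ell=kp$ for some $k\ge 1$, the $kp$ intervals have cardinalities $pa_1,\dots,pa_{kp}$ with $a_i\ge 0$ and $a_1+\dots+a_{kp}=n-k$, and the part living in the interval of size $pa_i$ ranges freely over $\M^p_{a_i}$. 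Hence
$$|\M^p_n|=\sum_{k\ge 1}\ \sum_{\substack{a_1+\dots+a_{kp}=n-k\\ a_i\ge 0}}\ \prod_{i=1}^{kp}|\M^p_{a_i}|,$$
which, since each coefficient of $x^n$ receives only finitely many contributions, translates into the generating-function identity
$$M_p(x)=1+\sum_{k\ge 1}x^k\,M_p(x)^{kp}.$$

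Next I would sum this geometric series: with $u=x\,M_p(x)^p$ the right-hand side equals $1+u/(1-u)=1/(1-u)$, so $M_p(x)\bigl(1-x\,M_p(x)^p\bigr)=1$, that is,
$$M_p(x)=1+x\,M_p(x)^{p+1}.$$
It remains to recall (or re-derive by Lagrange inversion) that the series $B_{p+1}(x)=\sum_{n\ge 0}A_n^{p+1}x^n$ is precisely the unique formal power series solution of $y=1+xy^{p+1}$: indeed, applying the Lagrange--B\"urmann formula to $y=1+x y^{p+1}$ gives $[x^n]y=\frac1n[t^{n-1}](1+t)^{(p+1)n}=\frac1n\binom{(p+1)n}{n-1}=\frac{((p+1)n)!}{n!\,(pn+1)!}=A_n^{p+1}$. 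Since $M_p$ satisfies the same equation and has constant term $1$, uniqueness yields $M_p=B_{p+1}$, and therefore $|\M^p_n|=A_n^{p+1}$. Taking $p=2$ recovers R3.

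The geometric-series manipulation and the Lagrange-inversion computation are routine; the one point requiring genuine care --- and which I expect to be the main obstacle --- is verifying that the block-of-$1$ decomposition really is a bijection, i.e. that the sizes of the gaps are forced to be multiples of $p$ (a per-gap constraint, not merely the global divisibility $pn-\ell\equiv 0$), and that the restrictions to the gaps can be chosen completely independently. An alternative, more in the combinatorial spirit of the rest of the paper, would be to construct a direct bijection between $\M^p_n$ and the planar $(p+1)$-partitions of $[(p+1)n]$ counted by Theorem~\ref{T1}; I expect that route to be correct but harder to present cleanly, so I would keep the generating-function argument as the main line.
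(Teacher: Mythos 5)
Your proof is correct, but it takes a genuinely different route from the paper's. The paper proves Theorem \ref{T4} by exhibiting a bijection between $\M^p_n$ and the set of $p$-tuple planar partitions of $[n]$: to $M\in\M^p_n$ it associates the chain of refinements $M^{(r)}=(M*I_r)^{/p}$, $r=1,\dots,p$, and then invokes Corollary \ref{C1} (itself a consequence of Theorem \ref{T3}) to conclude that such $p$-tuples are counted by $A_n^{p+1}$. You instead work directly with the first-block decomposition of a noncrossing partition, derive the algebraic equation $M_p=1+x\,M_p^{p+1}$ for the generating function, and identify its unique power-series solution by Lagrange inversion. Your decomposition is sound: noncrossingness forces every other block into one of the $\ell$ gaps determined by the block of $1$, each gap carries an independent partition of $\M^p$-type, and the per-gap divisibility follows because each gap is itself partitioned into blocks of size divisible by $p$; the geometric-series summation and the Lagrange--B\"urmann computation $\frac1n\binom{(p+1)n}{n-1}=\frac{((p+1)n)!}{n!\,(pn+1)!}=A_n^{p+1}$ both check out. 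What your route buys is a self-contained and fully rigorous argument that does not lean on the (rather sketchy) bijection arguments behind Theorems \ref{T2} and \ref{T3}; what it loses is the explicit bijective correspondence with multiple planar partitions, which is the structural point the paper is after and which ties Theorem \ref{T4} to the other results. Either argument stands on its own, and yours could serve as an independent check of the paper's claim.
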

This  generalizes     result  R3 ($p=2$).

In   Figure  \ref{F0}   we  illustrate the  results above by giving  different  examples of  the  occurrence of  $A_2^6=6$: (a)  by Theorem \ref{T1}, it is the  number  of $6$-planar partitions  of $[12]$;  (b) by Theorem \ref{T2},  the  number of  double partitions  of $[6]$  the first of  which is a planar $3$-partition; (c)  by Theorem \ref{T3},  the  number of triple partitions  of $[4]$  the first of  which is a planar $2$-partition; (d)
by Corollary \ref{C1}, the  number  of 5-tuple planar partitions of  $[2]$; (e) by Theorem \ref{T4},   the number of planar partitions of [10] whose blocks  have  a number of  elements multiple of  $5$.

\begin{figure}[H]
 \centering
 \includegraphics[scale=0.9]  {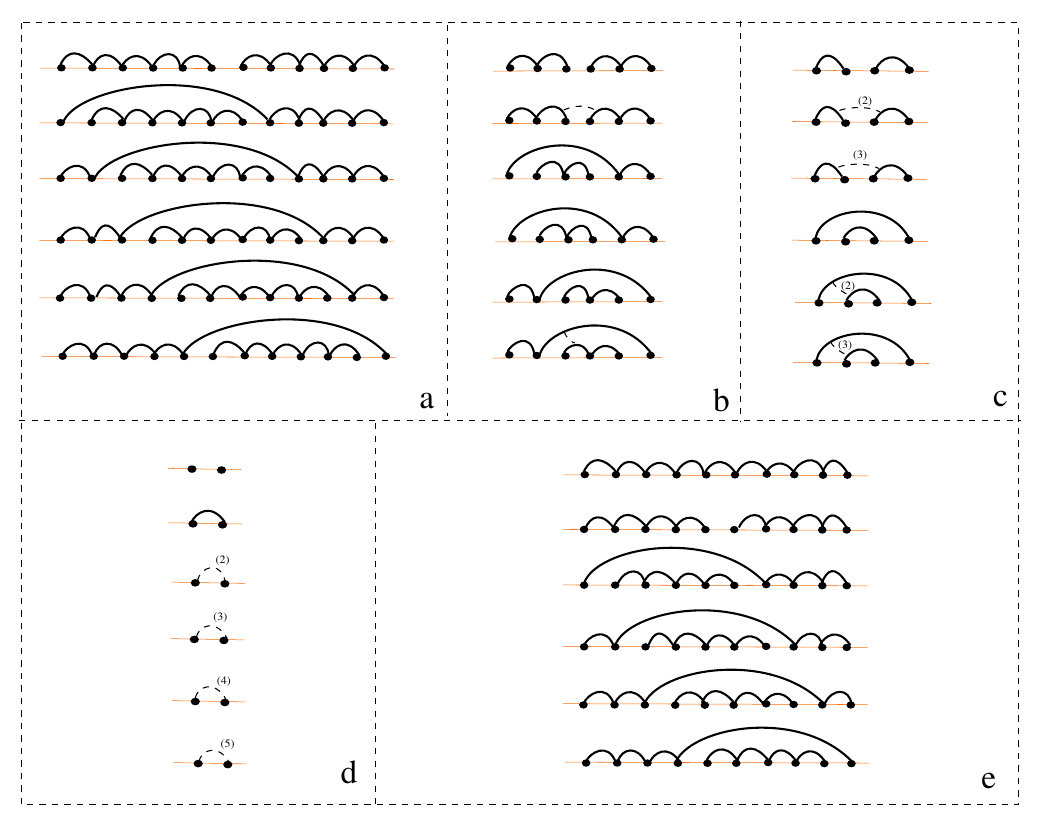}
 \caption{Five  examples of  the occurrence of  $A^6_2=6$.}\label{F0}
\end{figure}

In Appendix we  show  also  a  bijection    between the set of  the planar $p$-partitions of $[pn]$ and the set of the  $p$-ary trees with $n$ internal  nodes.

\section{Counting planar   $p$-partitions: proof of  Theorem \ref{T1} }\label{S1}

\subsection{Planar partitions and  arc  diagrams}\label{pla}

A partition  of  $[n]$  is  represented here  by   a  diagram,  consisting of $n$ points on a line, labeled  1 to $n$,  and  arcs  connecting  pairs of  points.   If  two  points  are connected by an  arc,  then they  belong  to  the  same block of  the partition.   The  diagram  representing a partition is  evidently  non unique.  A diagram   is  said  {\it standard}  if  for  every  pair of  points $(i<j)$ belonging to  the  same  block,  there is  an arc $(i,j)$  if  and  only if in the  same  block  there is no some   $k$  such  that  $i<k<j$.    The  standard  diagram  is  unique.  In what  follows  we deal  only with standard diagrams, unless it be  specified,  see Figure  \ref{F1}b.

 \begin{notation}\rm  A  partition is  said  planar if its  diagram is planar, i.e.,  its  arcs do no cross  each other. The  set of planar partitions of $[n]$ is  denoted by  $\P_n$, and the set of planar  $p$-partitions of $[pn]$ is  denoted by  $\P^p_n$.
\end{notation}

\begin{figure}[H]
 \centering
 \includegraphics[scale=0.6]  {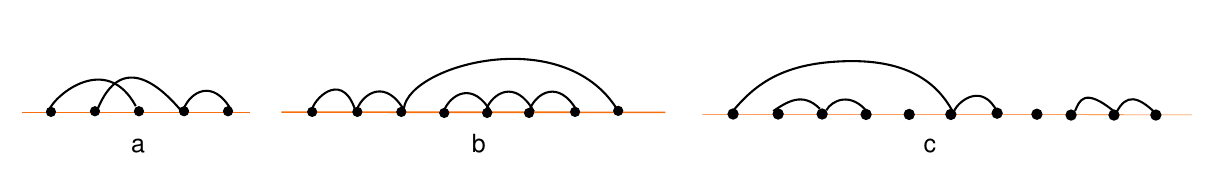}
 \caption{a)  Diagram of a non planar partition; b)   the  standard diagram of  a planar  4-partition; c) a partition of $[11]$  with 3 boxes  }\label{F1}
\end{figure}

Here we    prove Theorem \ref{T1},  generalizing  R1.

\begin{remark}\rm \label{p1} For $p=1$  a  planar  1-partition of $[n]$ consists of $n$  blocks, each one with 1 element.  Therefore it is unique and indeed  $A_n^1=1$ for  every $n$.
\end{remark}

\subsection{Proof of Theorem \ref{T1}} In \cite{Aic2} I have introduced, for  every  integer $p>0$, a   triangle $T^p(n,k)$,   whose rows  add to  $A^p_n$.

Here  we need to  consider,  for every  $p$,   the  triangle   $F^p(n,k)$,  obtained  by  $T^p(n,k)$ by:
\begin{equation}\label{reflex}    F^p(n,k):=  T^p(n,n-k), \quad  k=0,\dots,n.\end{equation}

The  integers $F^p(n,k)$  for   $n\ge 0$,  $0\le k \le n$,  result to  be given by the initial conditions
\begin{equation}\label{E0}
F^p(0,0)=1; \quad
F^p(n,0)=0 \quad \text{ for} \quad  n>0
  \end{equation}
  and  the  recurrence, for $n>0$  and  $0<  k \le n$:
 \begin{equation} \label{E2}   F^p(n,k)=  \sum_{j=k-1}^{n-1}  \binom{j-k+p-1}{p-2}F^p(n-1, j).
 \end{equation}
 
We recall  also  the  notion of  {\it box}  in a  diagram  of  a  planar partition.  Observe that   two  blocks $A,B$, satisfying  $min(A)<min(B)$  may  satisfy either  $max(A)<min(B)$      or  $max(A)>min(B)$.  In the last case  we have  also, by planarity,   that $max(A)>max(B)$, and  we  say that  $B$  is  nested  in $A$. A  box  is  a  block  which is  not  nested.
A  planar partition  of  $[n]$  with  $m$  blocks  has  $1\le k\le m$  boxes, see Figure  \ref{F1}.

We  denote  by  $N^p(n,k)$ the  number of  $p$-partitions  of  $[pn]$ having $k$  boxes.  We  see that  $N^p(n,k)$  fulfills  (\ref{E0}): indeed, the  void partition  has no boxes,  so $N^p(0,0)=1$; also,  there  are no $p$-partitions  of $[pn]$  with  no boxes,  so $N^p(n,0)=0$.

We see  now that  $N^p(n,k)$  satisfies  recurrence (\ref{E2}).
We consider the set of  partitions   of $\P^p_{n}$  having  $k$  boxes. We  observe that  each one  of them can be  obtained from a  unique partition   $Q\in \P^p_{n-1}$ having $j$  boxes  if $j\ge k-1$ and  $j\le n-1$,   by  adding  a  new  block $B$ to  $Q$  that has  the last  element  at  right  of  $Q$.
On the other hand,  given such a partition    $Q$ with   $j$  boxes,   we  may get different  partitions  with $k$  boxes.  Indeed, the  first  $k-1$ boxes  of $Q$  remain unaltered  in the  partition of $\P^p_{n}$,  say $P$,  while    $j-k+1$ boxes  must be  nested  in  the new  block $B$ of  $P$.   So, the  first point  of  $B$ is  necessarily  between the  first $ k-1$ boxes  and  the nested  $j-k+1$ boxes of $Q$.  Now,  between  the  first  and the last point of the new  block, there  are   $j-k+1$  boxes  and $p-2$  points of $B$, see Figure \ref{F2}.  The  partition $P$  is  uniquely defined by   the  ordered set containing  the $p-2$ points  and  the  $j-k+1$ boxes: for a fixed $Q$,  the  number  of possibilities  is  given  by
$\binom{j-k+1+p-2}{p-2}$.  This  gives the  recurrence (\ref{E2}).  Therefore  $N^p(n,k)=F^p(n,k)$.  Since  $|\P^p_n|$ is the sum of  the  numbers $N^p(n,k)$  for  $k=1, \dots, n$,  this  sum is $A^p_n$ by   (\ref{reflex})  and  (\ref{E0}).

\begin{figure}[H]
 \centering
 \includegraphics[scale=0.5]  {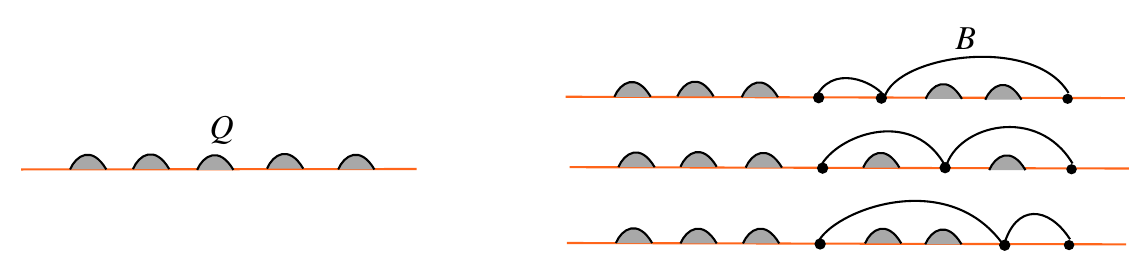}
 \caption{From a  partition  with  $j=5$  boxes and  $n$ blocks  to three  3-partitions  with  $k=4$  boxes  and  $n+1$ blocks. }\label{F2}
\end{figure}

\section{Counting  double planar  partitions}\label{S2}

\subsection{Double planar partitions  and  tied  arc diagrams}
In this  section  we  consider  special double planar partitions $(P_1,P_2)$ of  $[pn]$ in  which  $P_1\in \P^p_n$  and  $P_2\in \P_{pn}$. The  set of  such double partitions is  denoted by  $ \P^{II,p}_n$.

A  double partition  is  represented  by a  sole  diagram of  arcs  and ties  in the  following  way, see  also  \cite{AAJ}.  The partition  $P_1$  is  represented  by $n$ blocks.   Remember   that  each  block of $P_1$  consists of $p-1$ consecutive arcs. A block  is  labeled by  its point at left, so that  the blocks  of $P_1$ are  ordered  by this  labeling.   A tie is  an arc, drawn as a dotted line,     with  endpoints on two blocks of  $P_1$.      Suppose that  $B_{k_1},B_{k_2},\dots,B_{k_m}$  are   the ordered blocks of  $P_1$  contained in a same block of  $P_2$.  Then  we put a tie  from $B_{k_i}$  to  $B_{k_{i+1}}$,  for  $i=1,\dots, m-1$.   The planarity of  $P_2$  guarantees that the  ties can be drawn without crossing  neither the  arcs nor each other.  Moreover, a  double partition is uniquely represented  by  arcs and  ties in this  way, see  Figure \ref{F3}  for an example.

Now we  will prove that, for   $p>0$, the  cardinality of  the set $ \P^{II,p}_n$   is the Fuss Catalan number  $A^{2p}_n$.

\subsection{Proof  of  Theorem \ref{T2}}

The  proof given in  \cite{Aic1} for  the  case $p=4$  uses  the corresponding  Fuss-Catalan  triangle  and  here  we  could proceed the  same  way  for any  value of  $p$.   However,  we  prefer  to  proceed  by using  Theorem \ref{T1} an by proving that there is    a bijection    between  $\P^{2q}_n$  and $\P^{II,q}_n$.

We  need  to introduce  some basic notations.
The notion of refinement provides  the  set of  partitions of  $[n]$  with  a  partial  order, that  can  be  restrict to $\P_n$: $P_1\le P_2$  if  $P_1$ is  a  refinement of  $P_2$.  Also,   we  call  {\it product}   of two  planar partitions  $P_1$  and  $P_2$ the planar  partition  $P_3$ which  is  the minimal  partition  in $\P_n$ satisfying  $P_1 \le P_3$ and  $P_2 \le P_3$.  The product,  indicated  by the  symbol $*$,  is  evidently commutative.
 
\begin{remark}\label{I} Observe  that  the  partition $I  \in \P_n$ consisting  of  $n$ blocks  with  a unique  element,  satisfies  $I*P=P$  and   $I\le P$ for  every  $P\in \P_n$.  Moreover,  for  any  set of  partition  $\Q$,  the  set  of  double partitions $(I, Q)$  with  $Q\in \Q$  is  in bijection with  $\Q$.
\end{remark}

Finally,   given $P\in \P_{2n}$,  we  denote  by $P^{/2}$   a  partition   of $[n]$ obtained by  $P$ eliminating  from    all blocks  of  $P$ the even numbers and   sending   the  odd numbers $2j-1$  to $j$.

Consider  a partition $P\in \P_n^{2q}$.  The  elements of  every block  are  ordered  by  the  natural order in $[2n]$. Let us define the partition  $P'$ of $[qn]$  as  $P'=P^{/2}$.    

\begin{proposition}  The   partition $P'$   is  a  $q$-partition.
\end{proposition}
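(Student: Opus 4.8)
The plan is to show that removing the even-indexed elements from each block of $P \in \P_n^{2q}$ leaves exactly $q$ elements in each block, so that $P' = P^{/2}$ is a $q$-partition of $[qn]$. Since $P$ is a $2q$-partition, every block $B$ of $P$ has exactly $2q$ elements; what must be proved is that precisely half of them are odd and half are even, i.e., the map $P \mapsto P^{/2}$ sends each block to a set of size $q$. (One should also observe that $P^{/2}$ is a genuine partition of $[qn]$, but that is automatic: the odd numbers $1,3,\dots,2qn-1$ are partitioned by restricting the blocks of $P$ to odd elements, and relabeling $2j-1 \mapsto j$ carries them bijectively onto $[qn]$.)

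The key step is a parity argument exploiting planarity. First I would set up the following observation: in a standard diagram of a planar partition, consider any point $i$ and the block $B$ containing it. The arcs of $P$ together with the ambient line cut the strip above the line into regions, and a point $i$ sits in a region bounded below by part of the line; the key invariant is the number of arcs "above" the gap between consecutive integers. Concretely, for each $i \in \{1,\dots,2qn-1\}$ let $a_i$ denote the number of arcs $(\ell, r)$ of the standard diagram of $P$ with $\ell \le i < r$ (arcs that "straddle" the gap between $i$ and $i+1$). Planarity of $P$, combined with the fact that every block has even size $2q$, should force a parity condition: I would argue that $a_i$ has a fixed parity relative to $i$. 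The cleanest route: between consecutive elements of a single block $B = \{b_1 < b_2 < \dots < b_{2q}\}$, the standard diagram puts arcs $(b_1,b_2), (b_2,b_3), \dots, (b_{2q-1},b_{2q})$; one checks that as we scan from $b_1$ to $b_{2q}$, the number of these arcs straddling a given gap changes parity exactly when we cross a $b_t$, and since there are an even number $2q$ of them, the parity of (number of straddling arcs from $B$) $+$ (number of $b_t$ to the left of the gap) is constant along the block's span and equals, say, the parity it has just left of $b_1$.

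From this local parity fact I would bootstrap to a global one. Order all blocks by their minima; using nesting and the planarity structure (a block $B'$ nested in $B$ contributes an even number $2q$ of crossings to any gap strictly inside $B'$ and zero outside), an induction on the nesting forest shows that for every gap $i$, the total number of straddling arcs $a_i$ is congruent mod $2$ to $i$ minus the number of block-minima $\le i$ that are themselves minima of "boxes" — and a careful accounting collapses this to: $a_i \equiv i \pmod 2$ is impossible to violate, hence in fact $i$ and the "depth" have matched parity. The upshot I am driving at is that within each block $B = \{b_1 < \dots < b_{2q}\}$, consecutive elements $b_t$ and $b_{t+1}$ always have opposite parity: the number of arcs straddling the gaps between them is even (those come in nested pairs by planarity plus evenness of all blocks), which forces $b_{t+1} - b_t$ to be odd. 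Therefore the $b_t$ alternate odd/even, and since there are $2q$ of them, exactly $q$ are odd. That gives $|B \cap (\text{odds})| = q$ for every block, which is precisely the claim.

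The main obstacle will be making the global parity induction clean rather than a case-ridden slog: the statement "consecutive elements of any block of a planar $2q$-partition have opposite parity" is intuitively clear from drawing pictures (every region between two successive elements of a block is spanned by a collection of complete nested sub-blocks, each of even size, hence an even number of arc-endpoints lie there, hence an even number of integers), but turning "the integers strictly between $b_t$ and $b_{t+1}$ split into complete blocks nested inside $B$" into a rigorous lemma requires invoking the standard-diagram definition and planarity with some care. I expect that the right framing is: strict planarity implies that the open interval $(b_t, b_{t+1})$ is a union of blocks of $P$ (no block meets it partially), each of cardinality $2q$, so $b_{t+1} - b_t - 1 \equiv 0 \pmod{2q}$, in particular even, so $b_{t+1} - b_t$ is odd. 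Once that lemma is in hand the proposition is immediate.
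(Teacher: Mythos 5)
Your final framing is exactly the paper's argument: planarity forces the open interval between consecutive elements $e_i, e_{i+1}$ of a block to be a disjoint union of complete blocks of size $2q$, so $e_{i+1}=e_i+1+2kq$, the parities alternate, and each block contributes exactly $q$ odd elements. The lengthy detour through straddling-arc parity counts is unnecessary scaffolding, but the argument you ultimately settle on is correct and coincides with the paper's proof.
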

\begin{proof} The planarity of $P'$ follows  from the planarity  of $P$.  The  fact  that  $P'$ has exactly   $n$  blocks  with $q$  elements follows  from the  fact  that the   parity  of  the elements  $e_1, \dots, e_{2q}$ of  every  block of  $P$ alternates, since,  by planarity,  $e_{i+1}= e_i +  1 +  2kq $, for  some $k\ge 0$.  Therefore  each  block  of  the  partition $P'$  has  exactly  $q$  elements.
\end{proof}
 We  have  to  define  the  second partition, $P''\in \P_{qn}$,  of  the  double partition.

 Denote $I_2$ the partition  of  $[2nq]$  with  $qn$   blocks  of  2 elements  $\{ 2j-1,2j\}_{j\in [n] }$.  Then    consider  the  partition $\overline P= I_2 *  P$.  Clearly $P\le \overline P$.  The partition $P''$  of  $[qn]$ is thus  defined  as    $P'':=(\overline P)^{/2}$
 Diagrammatically,  we  will  put  a tie  between  two blocks  $B'_i$  and  $B'_j$  of $P'$  if    $  B_i$  and $  B_j$  result in the  same  block of  $\overline P$.

 This  procedure defines  uniquely the  partition $P''$  and  hence  the  double partition.
Example. In  Figure \ref{F3} we  show  how  from the  diagram of  the  partition  $P\in {\P^4_3}$, $ P=\{\{ 1,2,7,12\},\{3,4,5,6\},\{8,9,10,11\}  \}$, we get  the  diagram of   the    double partition    $(P',P'')\in  \P^{II,2}_3 $,   where $P'=\{\{1,4\},\{2,3\},\{5,6\}\}$, and  $P''=\{\{1, 4,5,6\},\{2,3\}\}$.

\begin{figure}[H]
 \centering
 \includegraphics[scale=0.6]  {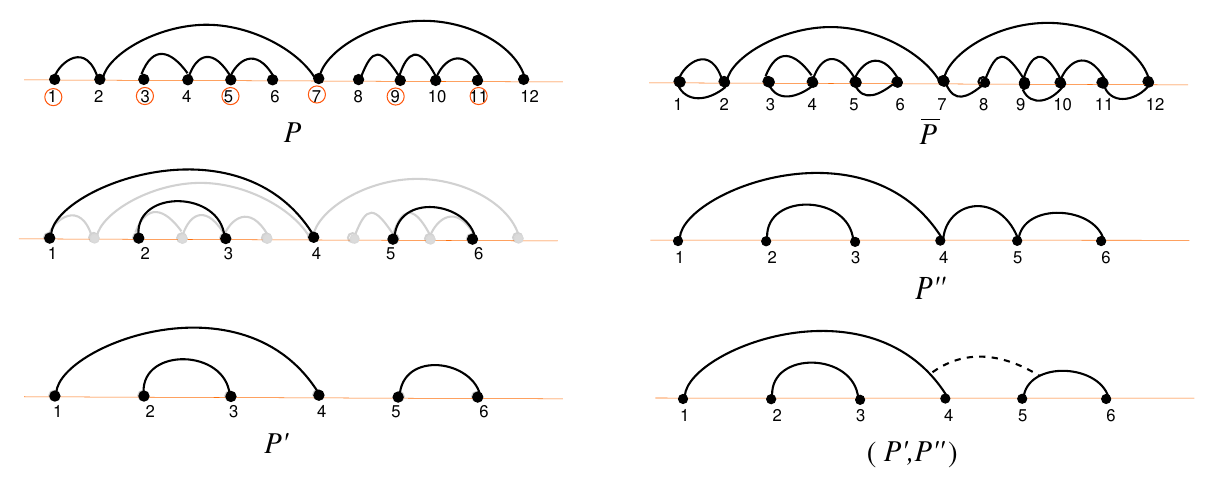}
 \caption{From the  diagram of  $P$  to  the  diagram of  $(P',P'')$. The  arcs of $I_2$  are  drawn below the  horizontal line. The  diagram of $\overline P$ is non standard   }\label{F3}
\end{figure}
The  next  observations   follow  from the  procedure  above introduced, as well as   from the  remark given proving Proposition \ref{p1}.

\begin{remark}\label{rem1}
\begin{enumerate}
\item
A  block $B_i'$ of  $P'$  is  non tied  to  other preceding blocks if and only if  the  first point of  $B_i$ is  odd;
\item  If  a  block $B'_j$ is  tied  to  a block  $B'_i$,  with  $j>i$,  then a)
the  first point of $B_j$ is  even and is the  next point of  an odd point  $y$ of $B_i$;
   the last  point of $B_j$ is odd  and  its next point is either  the even point of  $B_i$  successive to  $y$, or  the first  point of  another block  tied  to $B_j$.
\item The parity of  the  last  element of a block is  opposite to that of the  first one.
\end{enumerate}
  \end{remark}

To prove  that   the  procedure  above  defines  a bijection,  we  need to  verify that  the  inverse  procedure  defines  a  unique  planar $2q$-partition  $P$ starting  from  a  double planar partition  in which the  first partition $P'$ is  a $q$-partition and  the  second  one, $P''$, is planar.
Let us  start  from the  partition  $P'$  of  a  double partition in $\P^{II,q}_n$.  We  enumerate $B'_1, \dots, B'_n$ its  blocks   according  to their first points $x_1,\dots,x_n$,  and $B_1,\dots,B_n$ the  corresponding blocks  of $P$.  Remember that  the  block  $B_i$  surely contains  the odd points $2j-1$ for  all $j\in B'_i$.

To  define $P$,  we  have  therefore  to  put  the  even points  in each  block  $B_i$. We say that Remark \ref{rem1}  is sufficient to do  this.  Indeed,  if two consecutive  points $j,j+1$ belong to a  block $B'$,  the  even point $2j$ is  evidently  in $B$.  The problem arises   when we have to  put  in a  block of  $P$  a  point $2j$  such that $2j-1$ and  $2j+1$ belong to  different blocks $B'_r$  and  $B'_s$.  In this  case, we use Remark \ref{rem1}.  For  instance,  let us  start  from the double partition $(P',P'')$ in Figure \ref{F3}.  From   $P'=\{\{1,4\},\{2,3\},\{5,6\}\}$, we get the odd points of  the  three  blocks of $P$:      $B_1:\{1,7\},  B_2:\{3,5\}, B_3: \{9,11\}\}$.  Now, points 4  and  10  are  put in $B_2$ and  $B_3$. Since $B_1$ and  $B_3$  are tied, points 8 and  12 are put  in $B_2$ and  $B_1$  by  using item (2),  the  remaining point 2  belongs necessarily to $B_1$.

\section{Counting multiple planar  partitions}\label{S3}
\subsection{Proof  of  Theorem \ref{T3}}

The proof  of  Theorem \ref{T3} uses the  same argument  as   the proof of Theorem \ref{T2}:  we  define  a  bijection  between  $\P_n^{mp}$  and  $\P^{(m),p}_n$, the set  of  $m$-tuple partitions of $[pn]$,  of  which  the  first  one  is  a  $p$-partition,  and  the others  are  planar.

For  any  $k>2$, given $P \in  \P_{kn}$ we  denote  $P^{/k}$ the partition of  $[n]$  obtained by $P$ sending, for  every block,  each  element  of the form $kj+1$  to the element  $j+1$ of  a block of  $P^{/k}$. Observe that $0\le j \le n-1$.

Let $P\in \P_n^{mp}$, i.e.  $P$ is  a  planar partition of  $[mpn]$,  with $n$  blocks of  $mp$ elements.

Now,  we  define  the partitions   $I_r$, $r=1,\dots,m$  as  the partitions  of  $[mpn]$  containing  the  blocks
$\{ mk+1,mk+2,\dots,mk+r\}$  for  $k=0,\dots,pn-1$,  and coinciding  with  $I$ for  the  remaining elements.
Observe that  $I_1=I$.

Then  we  define, for  every $r=1,\dots,m$,  the  partition  of $[pn]$: $$P^{(r)}=  (P *  I_r)^{/m}$$
Observe that $I_1=I$, and   that  $P^{(1)}$ is  a  $p$-partition.  This last fact  follows  again from the  planarity of  $P$: the  number  of  elements  of   each  block  of  $P^{(1)}$  is  $p$  since  there  are  $p$  integers    in a  sequence  of    $mp$ integers  $x_i$  satisfying  $x_i=1 \mod m$   if  these  integer satisfy  $x_{i+1}= x_i+1 +  k  mp$  for  $k\ge 0$.

Since  $(P *  I_r)\le(P*I_{r+1})$,  the set $\{P^{(r)}\}_{r=1,\dots, m}$ is a set  of planar partitions ordered by  refinement,  in which $P^{(1)}$ is  a $p$-partition.

The proof   that  this  procedure defines  a  bijection  between  $\P^{mp}_{n}$  and  $\P^{(m),p}_n$  is  based  essentially on the   fact that  the  number of partitions     $\P_i\in \P^{mp}_{n}$ that  coincide  except  on  two blocks,  which are  sent by  $P^{/m}$ to the  same  two  blocks of a  partition   $P^{(1)}\in \P^{p}_n$ is  $m$  if and only if  these  blocks  of  $Q$  can  be  tied.  So, the $m$ partitions $P_i$  give rise  to the   different partition $P_i^{(r)}$ in which  they  become  tied.
The  proof  verifies that  the  products  of  $P_i$  with  $I_r$ cover all  these  possibilities. 

 In Figure \ref{F4}  we show  an example  with $m=3$, $p=2$  and $n=3$. Starting  from  a  $6$-partition  of  $[18]$,  we get  a   triple partition of  [6], of  which the  first  one is  a  2-partition.

\begin{remark}\rm  An  $m$-tuple partition, instead  of   a  set of $m$ partitions  ordered  by  refinement,
is  represented  by  a unique  diagram  with  arcs  and   ties, like the  double partition: the first partition  is represented as usually by an arc  diagram, and  every  tie   between  two  arcs is  labeled   by  the ordinal of the partition where  this  arc  appears.
\end{remark}

\begin{figure}[H]
 \centering
 \includegraphics[scale=0.6]  {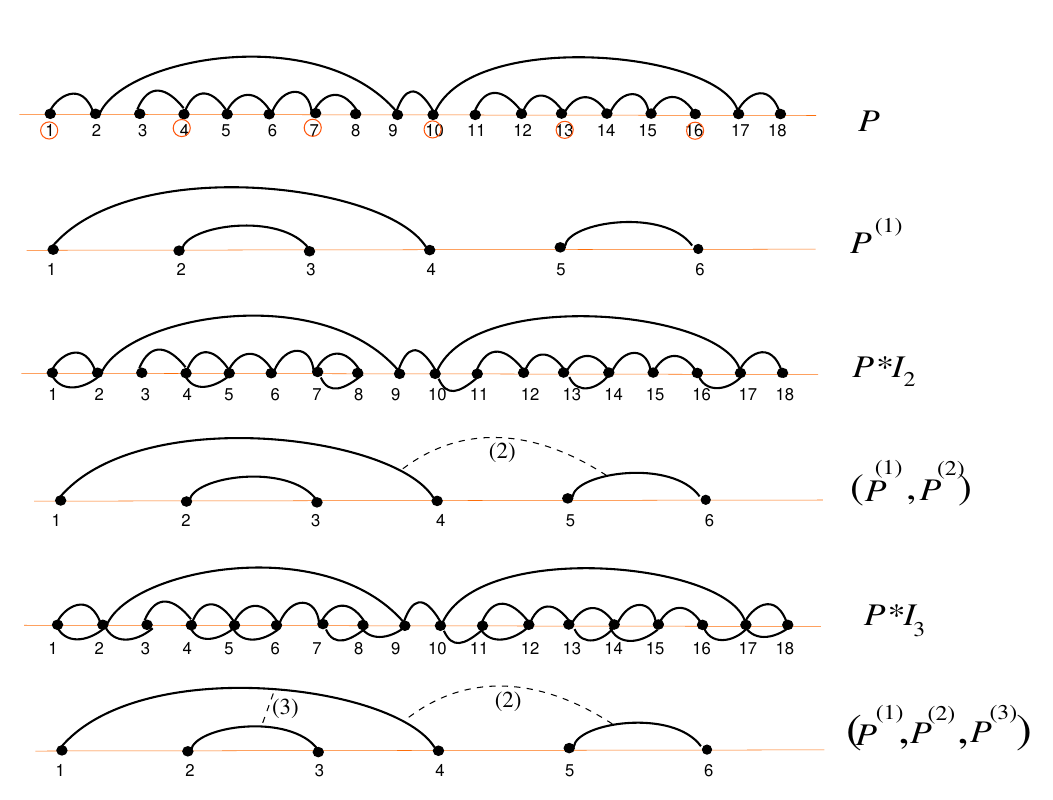}
 \caption{From the  diagram of  a 6-partition   to  the  diagram of  a triple partition. The  arcs of $I_2$  and $I_3$ are  drawn below the  horizontal line. The  diagrams of $P*I_j$ are non standard   }\label{F4}
\end{figure}

 \begin{proof}[Proof  of  Corollary \ref{C1}] When $p=1$,  Theorem  \ref{T3}  says  that $A^m_n$  counts  the  $m$-tuple  planar partitions of $[n]$  in which  the  first  partition is  the 1-partition,  therefore they  are  equivalent  to the  $(m-1)$-th planar partitions of $[n]$, see  Remarks  \ref{p1}  and  \ref{I}.
\end{proof}

\subsection{Proof  of  Theorem \ref{T4}}\label{S4}
 
 We  define  a  bijection  between  $\M^p_n$  and  the set of $p$-tuple planar partitions  of $[n]$. 
  Consider a planar partition $M\in \M^p_n$.    As  in the proof  of  Theorem  \ref{T3},  we  define the   partitions
  $$  M^{(r)}:=  (M *  I_r)^{/p}, \quad  r=1, \dots, p.  $$ 
 They  are  evidently  $p$  planar partitions of $[n]$ ordered by   refinement.   The  fact  that $M^{(1)}$  is a  planar partition of  $[n]$  follows  again from the planarity of  $M$  and   from  the  fact  that  in each block of $M$  with  $kp$ elements,  $k$ of them are  congruent to 1  $\mod p$,  see  proof of  Theorem  \ref{T3}.    
 
 Therefore  the  cardinality  of  $\M^p_n$ is  $A^{p+1}_n$  by  Corollary \ref{C1}.

\section{Appendix.  Planar p-partitions  and full   p-ary trees}

\begin{definition} \rm We call  for short {\it  $p$-tree} an ordered  tree where   every node has 0 or $p$ children (such trees  are  often called  full $p$-ary trees). The  set of $p$-trees  with $n$ internal  nodes (i.e.,nodes with $p$ children) is denoted $\T^p_n$.
\end{definition}

 In \cite{GKP}, it is shown that  the cardinality of $\T^p_n$ is $A^p_n$.  So,   another proof  of  Theorem \ref{T1} is  the  following

\begin{proposition} There  is  a  bijection  between the  set $\T^p_n$ of  $p$-trees  with  $n$ internal  nodes  and  $\P^p_n$
\end{proposition}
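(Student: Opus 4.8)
The plan is to construct the bijection recursively, exploiting the fact that both $\T^p_n$ and $\P^p_n$ satisfy the same Fuss-Catalan recursion, but in a way that makes the correspondence explicit rather than merely numerical. First I would recall the standard decomposition of a nonempty $p$-tree $t\in\T^p_n$: the root has $p$ children, each of which is the root of a (possibly empty) $p$-subtree $t_1,\dots,t_p$ with $n_1+\cdots+n_p=n-1$ internal nodes. On the partition side, I would decompose a planar $p$-partition $P\in\P^p_n$ via its leftmost block $B$, which by planarity is a box: writing $B=\{e_1<e_2<\cdots<e_p\}$, the $p-1$ gaps $(e_i,e_{i+1})$ between consecutive elements of $B$, together with the tail $(e_p,pn]$ to the right of $B$, partition the remaining $pn-p$ points into $p$ consecutive intervals, and planarity forces the rest of $P$ to restrict to a planar $p$-partition on each interval separately. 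Relabeling each interval order-isomorphically to $[p\,m_i]$ where $pm_i$ is its length, we get planar $p$-partitions $P_1,\dots,P_p$ with $m_1+\cdots+m_p=n-1$.

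The heart of the argument is then to define $\Phi\colon\P^p_n\to\T^p_n$ by $\Phi(\emptyset)=\bullet$ (the single leaf) and $\Phi(P)=$ the tree whose root has children $\Phi(P_1),\dots,\Phi(P_p)$, in that order, where $P_1,\dots,P_p$ come from the leftmost-box decomposition above. I would prove this is a bijection by exhibiting the inverse $\Psi$ directly: given $t$ with children $t_1,\dots,t_p$ of sizes $n_1,\dots,n_p$, form the block $B$ with one point, then a block's worth of $\Psi(t_1)$ relabeled into the next $pn_1$ positions, then the next point of $B$, then $\Psi(t_2)$ on the next $pn_2$ positions, and so on, finishing with the $p$-th point of $B$ followed by $\Psi(t_p)$. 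One checks $\Psi\circ\Phi=\mathrm{id}$ and $\Phi\circ\Psi=\mathrm{id}$ by induction on $n$, the base case $n=0$ being immediate and the inductive step following because the leftmost-box decomposition and the root-children decomposition are mutually inverse constructions.

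Alternatively — and this might be cleaner to write — I would verify that the function $G^p(n,k)$ counting planar $p$-partitions of $[pn]$ according to some natural secondary statistic, or simply $|\P^p_n|$ itself, is forced to equal $|\T^p_n|$ because Theorem~\ref{T1} already gives $|\P^p_n|=A^p_n$ and \cite{GKP} gives $|\T^p_n|=A^p_n$; but a bare counting identity is unsatisfying, so the recursive bijection above is the intended content. The main obstacle will be bookkeeping: one must check carefully that in the leftmost-box decomposition every one of the $p-1$ interior gaps and the final tail really does have length divisible by $p$ (this is exactly the alternation-of-residues argument already used in the proofs of Theorems~\ref{T1} and~\ref{T3}, where consecutive elements $e_{i+1}=e_i+1+p\cdot(\text{number of nested points})$), and that planarity of $P$ is equivalent to planarity of each $P_i$ with no arc straddling two intervals — which holds precisely because $B$ is a box, hence nothing nests across it. Once that structural equivalence is pinned down, the induction is routine and the bijection with $p$-ary trees reads off immediately by matching the $p$ intervals to the $p$ ordered subtrees.
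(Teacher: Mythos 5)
Your proposal is correct, and the map you construct is in fact the same bijection as the paper's, but you arrive at it by a genuinely different route. The paper defines $\tau$ directly and non-recursively: it orders the blocks by their first points, sends block $1$ to the root, and declares block $k$ to be the $i$-th child of block $k'$ exactly when the first point of block $k$ is the immediate successor of the $i$-th point of block $k'$; bijectivity is then argued by observing that both a $p$-tree and a standard planar $p$-partition diagram are uniquely encoded by the same list of pairs (parent, ordinal of child). You instead use the leftmost-box decomposition --- the block $B=\{e_1<\cdots<e_p\}$ containing $1$ cuts the remaining points into the $p-1$ gaps $(e_i,e_{i+1})$ plus the tail after $e_p$, each a union of whole blocks by planarity --- and match this against the root-with-$p$-subtrees decomposition of a tree, proving bijectivity by induction on $n$. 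Unwinding your recursion, the root of the subtree attached to the $i$-th gap is the block whose first point immediately follows $e_i=(1,i)$, so the two descriptions agree node by node. What your version buys is a self-contained inductive proof that transparently mirrors the Fuss--Catalan recursion and needs no global encoding argument; what the paper's version buys is an explicit, one-pass description of where each block goes, which makes the auxiliary observation about boxes (a box corresponds to the root or to a node all of whose ancestors are reached through maximal-ordinal children) immediate. One small simplification available to you: the divisibility of each gap length by $p$ does not require the alternation-of-residues argument you cite --- it follows at once from the fact that each gap is a disjoint union of entire blocks, each of cardinality $p$.
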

\begin{proof} Consider  the map  $\tau$ from  $\P^p_n$ to  $\T^p_n$ defined   in this  way.   Let  $D$  be  the  arc diagram  representing  a $p$-partition of [pn].  It consists of $pn$ points and   $n$ chains   of $p-1$ consecutive  arcs, that connect the  $p$ points of a same block.     For any two  adjacent points $x,x+1$ we say that   $x+1$  is the successive of $x$.  We  enumerate 1 to $n$  the blocks  according to the order of  their first points.   Then,  for  the $k$-th  block, we  denote by $(k,m)$, $m=1,\dots,p$,  its points   in the order from left to  right.   Now, we associate to $D$ a $p$-tree.  To  the  first  block we  associate  the root  of  the  $p$-tree  with $p$ children.  Observe now  that  the point (2,1) is  necessarily  at  right of  (1,1),  but is  the successive of (1,m), for some $1\le m \le p$.  Then we  label  by  2  the  $m$-th   child  of the root,  and we  give to it $p$ children. We  say  that  2  is the child of 1 of ordinal $m$.  Similarly,  point (3,1) is the successive either of  $(1,j)$ for $j>m$,  and in this  case  we label by  3  the  $j$-th child of  the root,   or of $(2,h)$  for  some  $1\le h \le p$, in which case  we label by 3  the $h$-th child  of node  2. When   we  give  a  label to  a  node,  we give to  it $p$ children. We proceed this  way  with all points $(k,1)$, till $k=p$, assigning the  label  $k$  to  the $i$-th  child of the node $k'<k$ such that $(k,1)$ is  the successive of  $(k',i)$.  Evidently  the label $k$  is  assigned to  a  node which is  internal  by construction. So, at  the end  we have  a $p$ tree  with $n$ internal  nodes.
 The map $\tau$ is  bijective  since  a $p$-tree  is  uniquely  defined  by  the $n-1$ pairs  $(a ,b )_k$, $k=2,\dots ,n$, where $a \in [n]$, $a<k$, is  the  father  of the node $k$  and  $b  \in [p]$  is  the ordinal of the  child $k$; on the other hand,  the diagram  of a  planar $p$-partition is  uniquely  determined by  the  same  pairs, meaning that  the point   $(k,1)$ is the successive of point $(a,b)$.
\end{proof}
Example.  See  Figure \ref{F5}  

\begin{figure}[H]
 \centering
 \includegraphics[scale=0.9]  {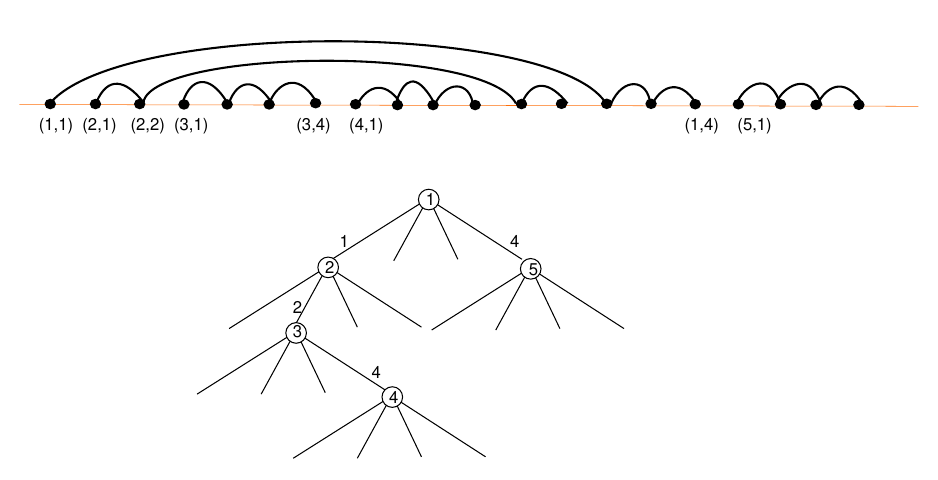}
 \caption{The diagram   of  a partition of $\P^4_5$  and  the  corresponding 4-tree  with  5 internal nodes   }\label{F5}
\end{figure}

\begin{remark}\rm  A  box   of  a  $p$-partition  $P$ correspond  evidently to a node of  $\tau(P)$  that  is   either the  root,   or a node  having    all  ancestors  with maximal   ordinal,    e.g. nodes 1 and  5  in the  figure  above.
\end{remark}

  \end{document}